\newcommand{\x}{\times}
\newcommand{\p}{\partial}
\renewcommand{\d}{\mathrm{d}}
\newcommand{\e}{\mathrm{e}}
\newcommand{\irm}{\mathrm{i}}
\newcommand{\Lap}{\mathrm{\Delta}}
\newcommand{\Hd}{\dot H}
\newcommand{\eqnb}{\begin{equation}}
\newcommand{\eqnbs}{\begin{equation*}}
\newcommand{\eqnbsa}{\begin{equation*}\begin{aligned}}
\newcommand{\eqnba}{\begin{equation}\begin{aligned}}
\newcommand{\eqnbl}[1]{\begin{equation}\label{#1}}
\newcommand{\eqnbal}[1]{\begin{equation}\label{#1}\begin{aligned}}
\newcommand{\eqnes}{\end{equation*}}
\newcommand{\eqne}{\end{equation}}
\newcommand{\eqnesa}{\end{aligned}\end{equation*}}
\newcommand{\eqnea}{\end{aligned}\end{equation}}
\newcommand{\re}[1]{(\ref{#1})}
\newcommand{\oneChar}{\hspace{11pt}}
\newcommand{\comment}[1]{}
\newcommand{\RR}{\mathbb{R}}
\newcommand{\TT}{\mathbb{T}}
\newcommand{\ZZ}{\mathbb{Z}}
\newcommand{\NN}{\mathbb{N}}
\newtheorem{theorem}{Theorem}
\newtheorem{lemma}{Lemma}
\newtheorem{corollary}{Corollary}
\begin{document}
\title{Well-posedness for the diffusive 3D Burgers equations with initial data
in $H^{1/2}$}
\author{
Benjamin C. Pooley \footnote{
	BCP is supported by an EPSRC Doctoral Training Award.} \footnote {B.C.Pooley@warwick.ac.uk} \footnote{Mathematics Institute, University of Warwick, Coventry, CV4 7AL, UK} \and James C. Robinson  \footnotemark[3] \footnote{J.C.Robinson@warwick.ac.uk}}
\maketitle

\begin{abstract}
In this note we discuss the diffusive, vector-valued Burgers equations in a three-dimensional domain with periodic boundary conditions. We prove that given initial data in $H^{1/2}$ these equations admit a unique global solution that becomes classical immediately after the initial time. To prove local existence, we follow as closely as possible an argument giving local existence for the Navier--Stokes equations. The existence of global classical solutions is then a consequence of the maximum principle for the Burgers equations due to \cite{Kiselev_Ladyzhenskaya}.  

In several places we encounter difficulties that are not present in the corresponding analysis of the Navier--Stokes equations. These are essentially due to the absence of any of the cancellations afforded by incompressibility, and the lack of conservation of mass. Indeed, standard means of obtaining estimates in $L^2$ fail and we are forced to start with more regular data. Furthermore, we must control the total momentum and carefully check how it impacts on various standard estimates.
\end{abstract}

\section{Introduction}
We consider the three-dimensional, vector-valued diffusive Burgers equations. The equations, for a fixed viscosity $\nu>0$ and initial data $u_0$, are
\eqnbl{eqBurgers}
u_t+(u\cdot\nabla)u-\nu\Lap u =0,
\eqne
\eqnbl{eqData}
u(0)=u_0.
\eqne
 Working on the torus $\TT^3 = \RR^3/2\pi\ZZ^3$, we will investigate the existence and uniqueness of solutions $u$ to \re{eqBurgers}. Using the rescaling $\widetilde u(x,t)\coloneqq \nu u(x,\nu t)$, it suffices to prove well-posedness in the case $\nu=1$.

This system is well known and is often considered to be ``well understood''. However we have not found a self-contained account of its well--posedness in the literature, although for very regular data (with two H\"older continuous derivatives, for example) existence and uniqueness can be deduced from standard results about quasi--linear systems. We are particularly interested in an analysis parallel to the familiar treatment of the Navier--Stokes equations, which motivates our choice of function spaces here. 

It is interesting to note that we find some essential difficulties in treating this system which do not occur when  incompressibility is enforced,  i.e.\ for the Navier--Stokes equations. These prevent us from making the usual estimates that would give existence of ($L^2$-valued) weak solutions. We also find that taking initial data with zero average is not sufficient to ensure that the solution has zero average for positive times. This necessitates estimating the momentum and checking carefully that the methods applicable to  the Navier--Stokes equations have a suitable analogue.   

We begin with some brief comments on several relevant methods from the literature to motivate our discussion here. 

A maximum principle for solutions of the Burgers equations was proved by \cite{Kiselev_Ladyzhenskaya}. A simplified version of this result with zero forcing plays a key role in our argument, so we reproduce the proof here.
\begin{lemma}\label{lemMP}
If $u$ is a classical solution of the Burgers equations \re{eqBurgers} on a time interval $[a,b]$ then
\eqnbl{lemMP:bound1}
\sup_{t\in[a,b]}\|u(t)\|_{L^\infty}\leq \|u(a)\|_{L^\infty}.
\eqne
\end{lemma}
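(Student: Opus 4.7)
The natural approach is to derive a scalar parabolic inequality for $\phi(x,t) := \tfrac12|u(x,t)|^2$ and apply the standard maximum principle. Since $u$ is classical on $[a,b]$, I can differentiate termwise:
\eqnbs
\p_t \phi = u\cdot u_t = \nu\, u\cdot\Lap u - u\cdot(u\cdot\nabla)u.
\eqnes
Using the componentwise identities $u\cdot\Lap u = \tfrac12\Lap|u|^2 - |\nabla u|^2$ and $u\cdot(u\cdot\nabla)u = (u\cdot\nabla)\tfrac12|u|^2$, this rearranges to
\eqnbl{planPhiEqn}
\p_t\phi - \nu\Lap\phi + (u\cdot\nabla)\phi \;=\; -\nu|\nabla u|^2 \;\leq\; 0,
\eqne
which is a linear parabolic inequality for $\phi$ with $u$ treated as a (bounded, smooth) coefficient.

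The plan is then to apply the weak maximum principle to \re{planPhiEqn} on $[a,b]\times\TT^3$. Because $\TT^3$ is compact and without boundary, any maximum of $\phi$ on $[a,b]\times\TT^3$ is attained, and it suffices to show it must occur at $t=a$. I would argue by the usual perturbation trick: for $\varepsilon>0$ set $\phi_\varepsilon(x,t) := \phi(x,t) - \varepsilon t$, which satisfies the strict inequality $\p_t\phi_\varepsilon - \nu\Lap\phi_\varepsilon + (u\cdot\nabla)\phi_\varepsilon \leq -\varepsilon < 0$. Suppose $\phi_\varepsilon$ attains its maximum at some interior (in time) point $(x_0,t_0)$ with $t_0\in(a,b]$; then $\nabla\phi_\varepsilon(x_0,t_0)=0$, $\Lap\phi_\varepsilon(x_0,t_0)\leq 0$, and $\p_t\phi_\varepsilon(x_0,t_0)\geq 0$, contradicting the strict inequality. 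Hence $\max \phi_\varepsilon$ occurs at $t=a$, and letting $\varepsilon\to 0$ yields
\eqnbs
\sup_{[a,b]\times\TT^3} \tfrac12|u|^2 \;\leq\; \sup_{\TT^3}\tfrac12|u(a)|^2.
\eqnes
Taking square roots gives \re{lemMP:bound1}.

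I do not expect any real obstacle: the only subtlety is the standard one of ensuring the maximum is not attained at the final time $t=b$, which is handled by the $\varepsilon t$ perturbation above. The key conceptual point specific to the vector-valued Burgers system is the identity \re{planPhiEqn}, which shows that even without any incompressibility cancellation, the quantity $|u|^2$ obeys a scalar drift-diffusion inequality with a favourable sign on the right-hand side.
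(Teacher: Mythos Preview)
Your proof is correct and follows essentially the same approach as the paper: derive a scalar parabolic equation for $|u|^2$ and apply the maximum principle on the compact manifold $[a,b]\times\TT^3$ via a perturbation that forces a strict inequality at any interior-in-time maximum. The only cosmetic difference is the choice of perturbation: the paper multiplies $u$ by $\e^{-\alpha t}$ (introducing a zeroth-order term $2\alpha|v|^2$ that is strictly positive at a nonzero maximum) and then sends $\alpha\to 0$, whereas you subtract $\varepsilon t$ and send $\varepsilon\to 0$; both are standard and equivalent devices.
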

\begin{proof}
Fix $\alpha>0$ and let $v(t,x)\coloneqq \e^{-\alpha t}u(x,t)$ for all $x\in\TT^3$. Then $|v|^2$ satisfies the equation
\eqnbl{lemMP:vEqn}
\frac{\p }{\p t} |v|^2 + 2\alpha |v|^2 +u\cdot\nabla |v|^2 - 2v\cdot\Lap v=0 .
\eqne
Since $2v\cdot\Lap v = \Lap |v|^2 - 2|\nabla v|^2$ we see that if $|v|^2$ has a local maximum at $(x,t)\in(a,b]\x \TT^3$ then the left-hand side of \re{lemMP:vEqn} is positive unless $|v(x,t)|=0$. Hence
\[ 
\|u(t)\|_{L^\infty}\leq \e^{\alpha t}\|u(a)\|_{L^\infty}.
\]
Now \re{lemMP:bound1} follows because $\alpha>0$ was arbitrary.
\end{proof} 
In the discussion of well-posedness for \re{eqBurgers} in \cite{Kiselev_Ladyzhenskaya} the maximum principle is used with approximations obtained by considering discrete times and replacing the time derivatives by difference quotients. Unfortunately one of the steps there is incorrect. In the MathSciNet review, R. Finn relates a comment by L. Nirenberg that there is a flaw in the compactness argument given on p.\ 675. This error appears to be fatal.

Another well known approach comes by analogy with the Burgers equations in one dimension, namely the Cole--Hopf transformation, which gives analytic solutions by reducing the problem to solving a heat equation. Unfortunately this can only give gradient solutions, and since we wish to draw comparisons with the classical equations of fluid mechanics this is a significant drawback.  

There is a theorem in the book of \cite{LSU} (Chapter VII, Theorem 7.1) giving local well--posedness for a certain class of quasi-linear parabolic problems that includes \re{eqBurgers}. In that theorem the data and solutions are taken to have spatial H\"older regularity\footnote{The spaces in which solutions are found are actually defined by the existence and H\"older continuity (with certain exponent) of the mixed derivatives $D_t^rD_x^s$ for $2r+s < 2+\alpha$, where $D_x^s$ is any spatial derivative of order $s$.} at least $C^{2,\alpha}$ for some $\alpha\in(0,1)$. It is likely that a consequence is global well--posedness in these spaces, but this is not stated. A brief sketch of the proof is given, but it is quite different from any familiar method used for the Navier--Stokes equations. Moreover  and there is also no discussion of solutions gaining regularity that we will demonstrate (see Lemma \ref{lemBootstrap}).

To simplify several of the estimates proved later, we define for $s\geq 0$ the operator $\Lambda^s$ acting on $H^s(\TT^3)$ as follows. Let $f\in H^s(\TT^3)$ have the Fourier series
\[
f(x)=\sum_{k\in\ZZ^3}\hat f_k\e^{\irm k\cdot x}\in H^{s}(\TT^3),
\]
then we define
\[
\Lambda^s f(x) \coloneqq \sum_{k\in \ZZ^3}|k|^{s}\hat f_k\e^{\irm k\cdot x}\in L^2(\TT^3).
\]
Moreover we denote by $\|\cdot\|_s$ the seminorm $\|\Lambda^{s}\cdot\|_{L^2}$. This is of course compatible with the definition of the Sobolev norm; $\|\cdot\|_{H^s}$ is equivalent to $\|\cdot\|_{L^2}+\|\cdot\|_{s}$. Note that in the Fourier setting it is more usual to define an equivalent norm on $H^s$ by
\[
\|f\|=\left(\sum_{k\in\ZZ^3}(1+|k|^{2s})|\hat f_k|^2\right)^{1/2},
\]
but here we will usually consider $\|f\|_{L^2}$ and $\|f\|_s$ separately, when estimating $\|f\|_{H^s}$. We will also make use of the fact that $\|f\|_s\leq\|f\|_t$ if $0<s\leq t$ and that $\Lambda^2=(-\Lap)$,

We call $u\in C^0([0,T];H^{1/2})\cap L^2(0,T;H^{3/2})$ with $u_t\in L^2(0,T;H^{-1/2})$ a \textit{strong solution} of \re{eqBurgers} if, for any $\phi\in C^\infty(\TT^3)$
\eqnbl{eqWeakBurgers}
\langle u_t,\phi\rangle+((u\cdot\nabla) u,\phi)_{L^2} + (\nabla u,\nabla \phi)_{L^2}=0
\eqne
for almost all $t\in[0,T]$. Here $\langle\cdot,\cdot\rangle$ denotes the duality pairing of $H^{-1/2}(\TT^3)$ with $H^{1/2}(\TT^3)$. We consider the attainment of the initial data $u_0\in H^{1/2}$ in the sense of continuity into $H^{1/2}$.

We have chosen to use the term \textit{strong solution} here, even though (in the classical treatment of the Navier--Stokes equations) this usually refers to solutions in $L^\infty(0,T;H^1)\cap L^2(0,T; H^2)$. Indeed, we shall see that the solutions we find become classical, in a similar way to local strong solutions of the Navier--Stokes equations (see \cite{JCR_Rod_Sad_book}).

The reason for considering well-posedness in $H^{1/2}$ is that, as for the Navier--Stokes equations, if $u$ is a solution to the Burgers equations on $\RR^3$ then, for $\lambda>0$, so is
\[
u_\lambda\coloneqq\lambda u(\lambda^2 t, \lambda x)
\] 
and in three dimensions the seminorm $\|\cdot\|_{1/2}$ is invariant under this scaling. Therefore we would ideally consider solutions in the homogeneous space $\Hd^{1/2}$; however, as we will see, the zero-average property is not necessarily preserved in the solution. Fortunately we will also see that it is natural to control the ``creation of momentum'' by $\int_0^t\|u(s)\|_{1/2}\,\d s$ and we will check carefully that the relevant techniques from the analysis of the Navier--Stokes equations in $\Hd^{1/2}$ can be adapted. 

We will prove the following theorem.
\begin{theorem}\label{thmBurgersExistUniq}
Given $u_0\in H^{1/2}$, there exists a unique global strong solution $u\in C^0([0,\infty);H^{1/2})\cap L^2(0,\infty;H^{3/2})$.  Moreover, except at the initial time, $u\in C^1((0,\infty);C^0)\cap C^0((0,\infty);C^2)$ and is a classical solution.
\end{theorem}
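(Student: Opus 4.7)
My plan is to mirror the Fujita--Kato approach to the Navier--Stokes equations in $\Hd^{1/2}$: construct a local strong solution via a Galerkin scheme built around an $H^{1/2}$ energy identity, prove uniqueness in the same class, bootstrap to a classical solution for positive times, and then extend globally using the maximum principle of Lemma \ref{lemMP}. The central a priori estimate is obtained by pairing \re{eqBurgers} with $\Lambda u$, which gives
\[
\tfrac{1}{2}\tfrac{\d}{\d t}\|u\|_{1/2}^2 + \|u\|_{3/2}^2 = -((u\cdot\nabla)u,\Lambda u);
\]
the trilinear term is controlled by $C\|u\|_{1/2}\|u\|_{3/2}^2$ via H\"older together with the embeddings $H^{1/2}\hookrightarrow L^3$ and $H^1\hookrightarrow L^6$, combined with the Fourier-side interpolation $\|u\|_1^2\leq\|u\|_{1/2}\|u\|_{3/2}$. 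Applied to Galerkin truncations $u_n$ (projections onto the first $n$ Fourier modes), this yields a Riccati-type differential inequality from which uniform bounds on $u_n$ in $L^\infty(0,T_0;H^{1/2})\cap L^2(0,T_0;H^{3/2})$ follow on some interval $T_0=T_0(\|u_0\|_{1/2})$ --- essentially by arranging that $\|u_n(t)\|_{1/2}$ stays below $2\|u_0\|_{1/2}$, which is enough to render the bad factor $(2-C\|u_n\|_{1/2})$ non-destructive. Controlling $\partial_t u_n$ in $L^2(0,T_0;H^{-1/2})$ from the equation, Aubin--Lions compactness produces a strong $H^{1/2}$-limit $u$ solving \re{eqWeakBurgers}, and continuity into $H^{1/2}$ at $t=0$ follows from the energy identity plus weak continuity.

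For uniqueness, I would subtract the equations for two solutions $u,v$ sharing the same initial data to get $w_t-\Lap w+(u\cdot\nabla)w+(w\cdot\nabla)v=0$, and pair with $\Lambda^{-1}w$ to work at the $H^{-1/2}$ level. Unlike the Navier--Stokes case there is no incompressibility cancellation, so both trilinear terms must be estimated directly in products of $H^{1/2}$ and $H^{3/2}$ norms of $u,v,w$; the resulting inequality, combined with Gr\"onwall and the $L^2(0,T_0;H^{3/2})$ integrability of $u$ and $v$, forces $w\equiv 0$. I would then invoke the regularity bootstrap (Lemma \ref{lemBootstrap}) to upgrade the strong solution to $C^1((0,T_0);C^0)\cap C^0((0,T_0);C^2)$ for positive times, using the fact that $u(t_0)\in H^{3/2}$ for a.e.\ $t_0\in(0,T_0)$ to restart the local theory in a smoother class and iterate.

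Global existence then rests on Lemma \ref{lemMP}. Although $u_0$ need not lie in $L^\infty$ (in 3D $H^{1/2}$ does not embed into $L^\infty$), the bootstrap gives $\|u(\varepsilon)\|_{L^\infty}<\infty$ for every $\varepsilon>0$ in the interval of existence, and the maximum principle then yields $\|u(t)\|_{L^\infty}\leq\|u(\varepsilon)\|_{L^\infty}$ for all subsequent times at which $u$ is classical. Feeding this $L^\infty$ bound back into a standard energy estimate such as
\[
\tfrac{\d}{\d t}\|u\|_{1/2}^2+\|u\|_{3/2}^2\leq C\|u\|_{L^\infty}^2\|u\|_{1/2}^2
\]
rules out finite-time blow-up of $\|u\|_{1/2}$, so the local existence theory can be iterated up to any finite time and one concludes $u\in C^0([0,\infty);H^{1/2})\cap L^2(0,\infty;H^{3/2})$. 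I expect the main obstacle to lie in the trilinear bookkeeping throughout: the absence of the $L^2$ energy identity and of mass conservation forces every estimate up to the $H^{1/2}$ or $H^{3/2}$ level, the ``creation of momentum'' flagged in the introduction must be tracked explicitly since the zero-average property is not preserved by either the Galerkin approximations or the limit, and care is needed to patch together the $H^{1/2}$ short-time theory with the $L^\infty$ maximum principle despite the mismatch in the initial regularity.
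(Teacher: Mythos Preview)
Your proposal has a real gap in the local-existence step. The estimate you obtain by pairing with $\Lambda u$,
\[
\frac{\d}{\d t}\|u\|_{1/2}^2 + 2\|u\|_{3/2}^2 \leq 2C\|u\|_{1/2}\|u\|_{3/2}^2,
\]
is \emph{not} a Riccati-type inequality and cannot be closed for arbitrary data. Rewriting it as $\frac{\d}{\d t}\|u\|_{1/2}^2 \leq 2(C\|u\|_{1/2}-1)\|u\|_{3/2}^2$, you see that when $\|u_0\|_{1/2}>1/C$ the right-hand side is non-negative and multiplied by $\|u\|_{3/2}^2$, a quantity you do not control a priori; there is no way to extract an inequality of the form $X'\leq f(X)$ and hence no existence time $T_0=T_0(\|u_0\|_{1/2})$. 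This is precisely the criticality obstruction: $\|\cdot\|_{1/2}$ is scaling-invariant for \re{eqBurgers}, so no local existence time can depend on that norm alone. Your argument would give global existence for small data but says nothing for large data; the claim that one can ``arrange that $\|u_n(t)\|_{1/2}$ stays below $2\|u_0\|_{1/2}$'' is exactly what fails.

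The paper circumvents this by the splitting of \cite{Marin-Rubio_JCR_Sad_2013} (see also \cite{Chemin_book_2006}): write $u_n=v_n+w_n$ where $v_n$ is the heat flow of $P_nu_0$ and $w_n$ has zero initial data. Then $v_n$ carries the large data but satisfies $\int_0^t\|v_n(s)\|_1^4\,\d s\to 0$ as $t\to 0$ (interpolate between the $L^\infty H^{1/2}$ and $L^2H^{3/2}$ heat bounds), while $w_n(0)=0$; the smallness needed to close the estimate comes from these two facts, not from $\|u_0\|_{1/2}$. A secondary point: your bound $|((u\cdot\nabla)u,\Lambda u)|\leq C\|u\|_{1/2}\|u\|_{3/2}^2$ tacitly uses $\|u\|_{L^6}\leq c\|u\|_1$, which holds only for mean-zero $u$. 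Since momentum is not conserved here, the full $H^1$ norm appears and Lemma \ref{lemMomentum} must enter the estimates explicitly (as it does throughout the paper's Section \ref{secH1/2}), not merely be acknowledged at the end.
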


We will prove this using Galerkin approximations to find unique local strong solutions and then, by bootstrapping, prove that the solution has enough regularity to rule out a blowup and deduce global existence using Lemma \ref{lemMP}.   

The well known arguments giving global existence of weak solutions to the Navier--Stokes equations (i.e.\ solutions with initial data in $L^2$ and regularity ${L^\infty(0,T;L^2)\cap L^2(0,T;H^1)}$) rely on the anti-symmetry 
\[((u\cdot\nabla)v,w)_{L^2}=-((u\cdot\nabla)w,v)_{L^2}
\] that is a consequence of incompressibility of $u$. This is not something we can make use of with the Burgers equations. We might instead try to find weak solutions using the maximum principle and the following estimate that holds for smooth solutions 
\eqnbl{eqLinftyIdea}
\frac{\d}{\d t}\|u\|_{L^2}^2 + \|\nabla u\|_{L^2}^2\leq \|u\|^2_{L^2}\|u\|_{L^\infty}^2.
\eqne
Making rigorous use of this would require a maximum principle to hold for the Galerkin approximations, but the proof of Lemma \ref{lemMP} does not work with a projection applied to the nonlinear term.

To avoid these difficulties we will start with more regular initial data (in $H^{1/2}$) and find classical solutions before making use of Lemma \ref{lemMP}. 

We separate the difficulties encountered in the proof of Theorem \ref{thmBurgersExistUniq} into two sections. In Section \ref{secH1} we prove global well-posedness for data $u_0\in H^1$. Here we use some standard \textit{a priori} estimates to find local strong solutions. We then bootstrap to show that the solution is classical after the initial time. This allows us to apply Lemma \ref{lemMP}, from which we derive better $H^1$ estimates that imply global existence.   

In Section \ref{secH1/2} we prove Theorem \ref{thmBurgersExistUniq} using techniques from \cite{Marin-Rubio_JCR_Sad_2013} to find a unique local solution for initial data $u_0\in H^{1/2}$. This solution instantly becomes classical, and hence global, by the results in Section \ref{secH1}.

\section{Solutions in $H^1$}\label{secH1}
We will use the method of Galerkin approximations. First we introduce some notation. For $n\in\NN$ let $P_n$ denote the projection onto the Fourier modes of order up to $n$, that is
\[
P_n\left(\sum_{k\in\ZZ^3}\hat u_k \e^{\irm x\cdot k}\right)=\sum_{|k|\leq n}\hat u_k \e^{\irm x\cdot k}.
\] Let $u_n=P_nu_n$ be the solution to
\eqnbl{eqGalerkinODEs}
\frac{\p u_n}{\p t} + P_n[(u_n\cdot\nabla)u_n] -\Lap u_n=0,
\eqne
with
\eqnb
u_n(0)=P_nu_0.
\eqne
For some maximal $T_n>0$ there exists a solution $u_n\in C^\infty([0,T_n)\x\TT^3)$ to this finite-dimensional locally-Lipschitz system of ODEs. 

As noted in the introduction, one of the interesting issues we encounter in this analysis of the Burgers equations is that we cannot guarantee that the solution has the zero-average property even if the initial data does. However we do have the following estimate to control the potential ``creation of momentum''. 
\begin{lemma}\label{lemMomentum}
Let $u,v$ be solutions of \re{eqGalerkinODEs}, with initial data $u_0$ and $v_0$ respectively. If $w=u-v$ and $w_0=u_0-v_0$ then
\eqnbal{lemMomentum:bound2}
\left|\int_{\TT^3} w(x,t)-w_0(x)\,\d x\right|\leq 8\pi^3\int_0^t\|w(s)\|_{1/2}(\|u(s)\|_{1/2}+\|v(s)\|_{1/2})\,\d s. 
\eqnea
In particular, taking $v\equiv 0$ yields
\eqnbl{lemMomentum:bound1}
\left|\int_{\TT^3} u(x,t)\,\d x\right|\leq8\pi^3 \int_0^t\|u(s)\|_{1/2}^2\,\d s+\left|\int_{\TT^3}u_0(x)\,\d x\right|. 
\eqne
\end{lemma}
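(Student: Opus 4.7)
The plan is to derive \re{lemMomentum:bound2} by integrating the equation for $w=u-v$ over $\TT^3$ and bounding the resulting nonlinear contribution in terms of $H^{1/2}$ seminorms; \re{lemMomentum:bound1} will then be the special case $v\equiv 0$, with $\int_{\TT^3}u_0\,\d x$ pulled across via the triangle inequality. The main conceptual obstacle, flagged throughout the introduction, is that because $u$ and $v$ are not divergence-free, the familiar cancellation that would make the mean of the nonlinearity vanish is lost; the best one can hope for is a bound on its rate of growth.

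To set this up I would first subtract the two Galerkin systems \re{eqGalerkinODEs} to obtain
\eqnbs
w_t + P_n\bigl[(u\cdot\nabla)u - (v\cdot\nabla)v\bigr] = \Lap w,
\eqnes
and then integrate over $\TT^3$. The diffusion term disappears by periodicity, and $P_n$ drops out of the spatial integral because it preserves the zero Fourier mode (since $|0|\le n$). Integrating in $s$ from $0$ to $t$ yields the identity
\eqnbs
\int_{\TT^3} \bigl[w(t)-w_0\bigr]\,\d x = -\int_0^t\int_{\TT^3} \bigl[(u\cdot\nabla)u - (v\cdot\nabla)v\bigr]\,\d x\,\d s,
\eqnes
so the entire lemma reduces to a pointwise-in-time bound on the inner vector-valued integral.

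For that bound I would use the algebraic splitting $(u\cdot\nabla)u - (v\cdot\nabla)v = (w\cdot\nabla)u + (v\cdot\nabla)w$ and integrate by parts to move each gradient off the ``transport'' field, giving
\eqnbs
\int_{\TT^3}\bigl[(w\cdot\nabla)u + (v\cdot\nabla)w\bigr]\,\d x = -\int_{\TT^3} (\nabla\cdot w)\,u\,\d x - \int_{\TT^3} (\nabla\cdot v)\,w\,\d x.
\eqnes
This is exactly the point at which the absence of incompressibility bites: $\nabla\cdot w$ and $\nabla\cdot v$ need not be zero. Each remaining term, however, is a pairing of a first-order derivative of one function against another, and I would estimate it componentwise by expanding in Fourier series: each component has the form $(2\pi)^3\sum_k (ik\cdot\hat f_k)\,\hat g_{-k}$, and splitting $|k| = |k|^{1/2}\cdot|k|^{1/2}$ before applying Cauchy--Schwarz produces the $H^{-1/2}$/$H^{1/2}$ duality bound $\bigl|\int (\nabla\cdot f)\,g\,\d x\bigr| \lesssim \|f\|_{1/2}\|g\|_{1/2}$, with the torus volume $8\pi^3=(2\pi)^3$ appearing through Parseval. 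Applying this with $(f,g)=(w,u)$ and $(v,w)$, summing, and integrating in $s$ gives \re{lemMomentum:bound2}. Finally, taking $v\equiv 0$ (so $w=u$ and $w_0=u_0$) and using $|\int u_0\,\d x| + |\int [u(t)-u_0]\,\d x| \ge |\int u(t)\,\d x|$ delivers \re{lemMomentum:bound1}.
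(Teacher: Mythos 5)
Your proof is correct and follows essentially the same route as the paper: integrate the difference of the Galerkin equations over $\TT^3$ (noting $P_n$ preserves the mean), split the nonlinear difference into two bilinear terms, and bound each via Fourier expansion with Cauchy--Schwarz after splitting $|k|=|k|^{1/2}\cdot|k|^{1/2}$, then integrate in time. Your symmetric decomposition $(w\cdot\nabla)u+(v\cdot\nabla)w$ versus the paper's $(u\cdot\nabla)w+(w\cdot\nabla)v$, and your intermediate integration by parts before passing to Fourier series, are immaterial variations of the same argument.
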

\begin{proof}
For $k\in\ZZ^3$ denote the $k$th Fourier coefficients of $u$, $v$ and $w$ by $\hat u_k$, $\hat v_k$ and $\hat w_k$  respectively.
Considering the form of the equations satisfied by $u$ and $v$, we have
\eqnbsa
\frac{\d}{\d t}\int_{\TT^3} w(x,t)\,\d x &= - \int_{\TT^3} (u\cdot\nabla)w +(w\cdot\nabla)v\, \d x\\
&=-8\pi^3\irm\sum_{k\in\ZZ^3}\left(\overline{\hat u_k(t)}\cdot k\right)\hat w_k(t)+\left(\overline{\hat w_k(t)}\cdot k\right)\hat v_k(t),
\eqnesa
Hence
\eqnbsa
\,&\left|\frac{\d}{\d t}\int_{\TT^3} w(x,t)\,\d x\right|\leq 8\pi^3\sum_{k\in\ZZ^3}|\hat w_k||k|(|\hat u_k|+|\hat v_k|)\\
&\oneChar\leq8\pi^3\|w(t)\|_{1/2}(\|u(t)\|_{1/2}+\|v(t)\|_{1/2}),
\eqnesa
then \re{lemMomentum:bound2} follows after integrating with respect to $t$. 
\end{proof}

We will use this lemma to control the failure of equivalence of the norm $\|\cdot\|_{H^s}$ and the seminorm $\|\cdot\|_{s}$ for solutions of  \re{eqGalerkinODEs} as follows:
\eqnbl{eqEquivFail}
\|u_n(t)\|_{s}\leq \|u_n(t)\|_{H^s}\leq c\|u_n(t)\|_{s} + c\int_0^t\|u_n\|_{1/2}^2\,\d s+c\|u_0\|_{L^1},
\eqne
for some $c>0$ depending only on $s$. Here we have used the fact that $\int_{\TT^3}P_n u_0=\int_{\TT^3}u_0$. Note that we will occasionally use the equivalence of the seminorms $\|\cdot\|_s$ and $\|\cdot\|_{\dot H^s}$ when applicable. In particular for estimating the derivatives of sufficiently regular functions e.g.\ $\|\nabla u_n\|_{L^6}\leq c\|u_n\|_{2}$. 

We will prove the following special case of Theorem \ref{thmBurgersExistUniq}. The proofs of some estimates will only be sketched, if they are standard or when similar arguments are made in detail in Section \ref{secH1/2}. 
 
\begin{theorem}\label{thmBurgersExistUniqH1}
Given $u_0\in H^{1}$, there exists a unique global strong solution $u\in C^0([0,\infty);H^{1})\cap L^2(0,\infty;H^{2})$. Moreover, except at the initial time, $u\in C^1((0,\infty);C^0)\cap C^0((0,\infty);C^2)$ is a classical solution.
\end{theorem}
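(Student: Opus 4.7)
The plan is to follow the Galerkin scheme set up in the passage: establish a uniform $H^1$ a priori bound on $u_n$ on a short interval, extract a strong limit by Aubin--Lions, bootstrap its regularity, and then invoke Lemma \ref{lemMP} to preclude blow-up. The main a priori bound comes from testing \re{eqGalerkinODEs} against $-\Lap u_n$ in $L^2$. Using self-adjointness of $P_n$, H\"older with exponents $(6,3,2)$, the embedding $\|u_n\|_{L^6}\leq c\|u_n\|_{H^1}$, and the interpolation $\|\nabla u_n\|_{L^3}\leq c\|u_n\|_1^{1/2}\|u_n\|_2^{1/2}$ (via $\|\nabla u_n\|_{L^6}\leq c\|u_n\|_2$, which uses that $\nabla u_n$ has zero spatial mean), the nonlinear term is bounded by $c\|u_n\|_{H^1}\|u_n\|_1^{1/2}\|u_n\|_2^{3/2}$. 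Young's inequality with exponents $(4,4/3)$ then gives
\[
\tfrac{\d}{\d t}\|u_n\|_1^2 + \|u_n\|_2^2 \leq C\|u_n\|_{H^1}^4\,\|u_n\|_1^2.
\]
The crucial subtlety is the presence of the full $H^1$ norm on the right, which I would handle via \re{eqEquivFail}: $\|u_n\|_{H^1}$ is bounded by $\|u_n\|_1$, $\int_0^t\|u_n\|_{1/2}^2\,\d s$, and $\|u_0\|_{L^1}$. Since $\|u_n\|_{1/2}\leq \|u_n\|_1$ this yields a closed Bernoulli-type inequality for $\|u_n\|_1^2$, and a barrier/continuity argument produces $T_0=T_0(\|u_0\|_{H^1})$ on which $\{u_n\}$ is uniformly bounded in $L^\infty(0,T_0;H^1)\cap L^2(0,T_0;H^2)$.

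A bound on $\p_t u_n$ in $L^2(0,T_0;L^2)$ now follows from \re{eqGalerkinODEs} and Sobolev product estimates, so Aubin--Lions extracts a subsequence converging strongly in $L^2(0,T_0;H^1)$ to a limit $u$, which one checks is a strong solution lying in $C^0([0,T_0];H^1)$. Uniqueness on $[0,T_0]$ follows by forming $w=u-v$ between two solutions with the same initial data, testing the difference equation against $-\Lap w$, bounding the resulting trilinear terms by the same H\"older/interpolation inequalities, and controlling $\|w\|_{L^2}$ via Poincar\'e--Wirtinger together with the difference estimate \re{lemMomentum:bound2}, then applying Gronwall. For classical regularity on $(0,T_0]$ I would bootstrap in the standard parabolic fashion: from $u\in L^2(0,T_0;H^2)$ there is some $t_1\in(0,T_0)$ with $u(t_1)\in H^2$; rerunning the existence argument with $H^2$ data (the same estimates close at higher regularity once \re{eqEquivFail} is used) gives improved Sobolev regularity on $[t_1,T_0]$, and iterating a finite number of times places $u$ in $C^0((0,T_0);C^2)\cap C^1((0,T_0);C^0)$ via Sobolev embedding, so $u$ is classical after the initial time.

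Once $u$ is classical for $t>0$, Lemma \ref{lemMP} gives $\|u(t)\|_{L^\infty}\leq\|u(t_0)\|_{L^\infty}$ for any $0<t_0\leq t$, and this $L^\infty$ bound linearises the $H^1$ energy inequality: the nonlinear term is now estimated by
\[
|((u\cdot\nabla)u,\Lap u)_{L^2}|\leq \|u\|_{L^\infty}\|u\|_1\|u\|_2\leq \tfrac12\|u\|_2^2 + \tfrac12\|u\|_{L^\infty}^2\|u\|_1^2,
\]
so Gronwall controls $\|u\|_1$ on any finite interval, and then \re{eqEquivFail} (together with Lemma \ref{lemMomentum}) converts this into a global $H^1$ bound. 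This rules out blow-up and extends the solution to $[0,\infty)$. The main obstacle I anticipate is the initial a priori estimate itself: the absence of incompressibility prevents any cancellation in $((u_n\cdot\nabla)u_n,\Lap u_n)_{L^2}$, so all three spatial derivatives in the trilinear form must be paid for by $\|u_n\|_2$; simultaneously, the lack of mass conservation blocks a direct Poincar\'e-type passage from the seminorm $\|\cdot\|_1$ to the full $H^1$ norm, and reconciling these two defects through \re{eqEquivFail} into a single closed Gronwall-type inequality is the delicate part of the argument.
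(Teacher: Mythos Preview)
Your proposal is correct and follows essentially the same route as the paper: Galerkin approximations, the $H^1$ energy estimate obtained by testing against $-\Lap u_n$ and handling the full $H^1$ norm via \re{eqEquivFail}, Aubin--Lions compactness, parabolic bootstrapping to classical regularity, and then the maximum principle to linearise the $H^1$ estimate and rule out blow-up. The only inessential differences are that the paper carries out the bootstrap on the Galerkin approximations (via Lemma \ref{lemBootstrap}) rather than on the solution, and defers the uniqueness argument to the $H^{1/2}$ setting of Section \ref{secH1/2} instead of proving it at the $H^1$ level as you do.
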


We first need a lower bound on the existence times for the Galerkin systems \re{eqGalerkinODEs} that is uniform, i.e.\ independent of $n$. For this we integrate the $L^2$ inner product of \re{eqGalerkinODEs} with $\Lambda^2 u_n$. Using the inequalities of H\"older and Young to control the nonlinear term, as we would for the Navier--Stokes equations, we obtain
\eqnbsa
\|u_n(t)\|^2_{1} + \int_0^t\|u_n(s)\|_{2}^2\,\d s\leq \|u_n(0)\|^2_{1} + c\int_0^t\|u_n(s)\|_{L^6}^4\|u_n(s)\|_{1}^2\,\d s
\eqnesa
for some $c>0$. Now by the embedding $H^1\hookrightarrow L^6$ and Lemma \ref{lemMomentum} 
\eqnbal{eqH1est1}
\,&\|u_n(t)\|^2_{1} + \int_0^t\|u_n(s)\|_{2}^2\,\d s\leq \|u_n(0)\|^2_{1} + c\int_0^t\|u_n(s)\|_{1}^6 \,\d s\\
&\oneChar\oneChar+ c\left(\int_0^t\|u_n(s)\|^2_{1/2}\,\d s + \|u_0\|_{L^1}\right)^4\int_0^t\|u_n(s)\|_{1}^2\,\d s\\
&\oneChar\leq \|u_n(0)\|^2_{1} + c\int_0^t\|u_n(s)\|_{1}^6 +c\int_0^t t^4\|u_n(s)\|^{10}_{1} + \|u_0\|_{L^1}^4\|u_n(s)\|_{1}^2\,\d s,
\eqnea
for some $c>0$. The last step made use of the fact that $\|u_n\|_{1/2}\leq \|u_n\|_{1}$ and the H\"older inequality
\[
\left(\int_0^t f(s)\,\d s\right)^5\leq t^{4}\int_0^t|f(s)|^5\,\d s.
\] 

Let us now impose the upper bound $t\leq 1$. Applying Young's inequality to the $\|u_n\|_{1}^6$ and $\|u_n\|_{1}^2$ terms of the last line of \re{eqH1est1} gives
\eqnbsa
\|u_n(t)\|^2_{1}&\leq \|u_n(0)\|^2_{1} + c\int_0^t\alpha^5\|u_n(s)\|^{10}_{1}+ \beta^5\,\d s,
\eqnesa
where $\alpha \coloneqq  (4+\|u_0\|_{L^1}^4)^{1/5}$ and $\beta\coloneqq (2+4\|u_0\|_{L^1}^4)^{1/5}$.
This gives an integral inequality of the form
\[
f(t)\leq f(0) + \int_0^t (af(s)+b)^5\,\d s.
\]
By solving this inequality we obtain 
\eqnbal{eqH1est2}
\|u_n(t)\|^2_{1}\leq\frac{\alpha \|u_0\|_{1}^2 + \beta}{\alpha\left(1-4\alpha t(\alpha\|u_0\|^2_{1}+\beta)^4\right)^{1/4}} - \frac{\beta}{\alpha}.
\eqnea
 Using Lemma \ref{lemMomentum} this estimate rules out a blowup of $u_n$ in $H^{1}$ before the time 
\eqnbl{eqTast}
T^\ast \coloneqq \frac{1}{4\alpha (\alpha\|u_0\|^2_{1}+\beta)^4}.
\eqne
It follows that there exists $T>0$, we can for example take $T=T^\ast/2$, such that $T_n\geq T$ for all $N$. At this point one might optimize the existence time $T$ by choosing a different upper bound where we assumed $t\leq 1$, but this is not necessary in what follows.

From  \re{eqH1est1} and \re{eqH1est2} we now have uniform bounds on $u_n\in L^\infty(0,T;H^1)$ and on $u_n\in L^2(0,T;H^2)$. By \re{eqGalerkinODEs} and a standard argument, we  also obtain a uniform bound on the time derivative $\p_t u_n\in L^2(0,T; L^2)$. Therefore, by the Aubin-Lions theorem, we may assume (after passing to a subsequence) that there exists $u\in C^0([0,T];H^1)$ such that $u_n\to u$ in $L^p(0,T;L^2)$ for all $p\in(1,\infty)$. A standard method shows that the limit $u$ is a strong solution. For the details of this type of argument, see, for example, \cite{ConstFoias}, \cite{Evans_PDEBook}, \cite{GaldiNotes} or \cite{JCR_RedBook}. 

 It can also be shown that this solution $u$ is a unique. We omit the proof of this here but we will see, in Section \ref{secH1/2}, that uniqueness requires even less regularity. Subject to this omission we have so far proved the following.
\begin{lemma}\label{lemLocalH1Solns}
Given $u_0\in H^1$ there exists $T >0$ (given by \re{eqTast}) such that the Burgers equations admit a unique strong solution $u$ on $[0,T]$ with initial data $u_0$. Moreover $u\in C^0([0,T];H^1)\cap L^2(0,T;H^2)$.  
\end{lemma}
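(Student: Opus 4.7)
The plan is to construct the local solution via Galerkin truncation and standard compactness arguments, deferring uniqueness to Section~\ref{secH1/2} as indicated. Each Galerkin system \re{eqGalerkinODEs} is a finite-dimensional locally-Lipschitz ODE, so admits a smooth solution $u_n$ on a maximal interval $[0, T_n)$. The main task is to establish a lower bound $T_n \geq T$ independent of $n$, together with uniform bounds on $u_n$ in $L^\infty(0,T;H^1) \cap L^2(0,T;H^2)$, which then enables compactness.

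For the a priori bound I would pair \re{eqGalerkinODEs} with $-\Lap u_n$ in $L^2$. The viscous term supplies $\|u_n\|_2^2$ and the nonlinear term $((u_n\cdot\nabla)u_n, \Lap u_n)$ is controlled by H\"older, the interpolation-type bound $\|\nabla u_n\|_{L^3} \lesssim \|u_n\|_1^{1/2}\|u_n\|_2^{1/2}$, the embedding $H^1 \hookrightarrow L^6$, and Young's inequality, exactly as one does for Navier--Stokes. The new difficulty, reflecting the absence of mass conservation, is that the Sobolev embedding feeds in the \emph{full} $H^1$ norm rather than just the seminorm $\|\cdot\|_1$; this is where \re{eqEquivFail} (a direct consequence of Lemma \ref{lemMomentum}) is substituted to re-express $\|u_n\|_{H^1}$ in terms of $\|u_n\|_1$, $\|u_0\|_{L^1}$ and $\int_0^t \|u_n\|_{1/2}^2\,\d s$. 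Using $\|u_n\|_{1/2} \leq \|u_n\|_1$ and a H\"older bound on the time integral, this produces the term $\int_0^t t^4\|u_n\|_1^{10}\,\d s$ in \re{eqH1est1}.

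Restricting to $t \leq 1$ and applying Young's inequality to the $\|u_n\|_1^6$ and $\|u_n\|_1^2$ contributions reduces the estimate, with $f(t) := \|u_n(t)\|_1^2$, to an integral inequality of the form $f(t) \leq f(0) + \int_0^t(af(s)+b)^5\,\d s$, which is solved explicitly by comparison with the ODE $g' = (ag+b)^5$ to give the closed expression \re{eqH1est2}. This rules out blow-up of $\|u_n\|_1$ before $T^\ast$ (defined in \re{eqTast}); Lemma \ref{lemMomentum} then upgrades this to a bound on the full $H^1$ norm. Setting $T = T^\ast/2$ gives the uniform bounds $u_n \in L^\infty(0,T;H^1) \cap L^2(0,T;H^2)$, and inserting these into \re{eqGalerkinODEs} together with a Sobolev product estimate for $(u_n\cdot\nabla)u_n$ yields a uniform bound on $\partial_t u_n$ in $L^2(0,T;L^2)$.

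Armed with these bounds, Aubin--Lions extracts a subsequence converging strongly in $L^p(0,T;L^2)$ and weakly in $L^2(0,T;H^2)$ to some $u \in C^0([0,T];H^1)$; the strong convergence handles the quadratic nonlinearity, allowing passage to the limit in the Galerkin weak form \re{eqWeakBurgers}, and the attainment of $u_0$ in $H^1$ follows from the uniform $H^1$-continuity implied by the $\partial_t u_n$ bound. The step I expect to be most delicate is not the compactness machinery but the careful bookkeeping marrying the seminorm $\|\cdot\|_1$ estimate with the momentum control from Lemma \ref{lemMomentum}: matching exponents so that Young's inequality cleanly closes the bound — producing precisely the powers $5$ and $10$ — takes some care, and an analogous issue will resurface at the scaling-critical $H^{1/2}$ level in Section \ref{secH1/2}.
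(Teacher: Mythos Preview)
Your proposal is correct and follows essentially the same approach as the paper: Galerkin approximation, pairing with $-\Lap u_n$, the same H\"older/interpolation/Young chain on the nonlinear term leading to \re{eqH1est1}, the momentum correction via \re{eqEquivFail}, the reduction to the integral inequality $f(t)\leq f(0)+\int_0^t(af+b)^5$ under the restriction $t\leq 1$, the explicit bound \re{eqH1est2} giving $T^\ast$, and then Aubin--Lions compactness with uniqueness deferred to Section~\ref{secH1/2}. Your account even makes explicit the interpolation $\|\nabla u_n\|_{L^3}\lesssim\|u_n\|_1^{1/2}\|u_n\|_2^{1/2}$ that the paper leaves implicit in the phrase ``as we would for the Navier--Stokes equations''.
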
 
It follows that if $u\in L^\infty(0,T;H^{1/2})\cap L^2(0,T;H^{3/2})$ is a strong solution of the Burgers equations then $u\in C^0([\varepsilon,T];H^1)\cap L^2(\varepsilon,T;H^2)$ for any $\varepsilon\in(0,T)$. Therefore the following corollary is an easy consequence of \re{eqTast}.
\begin{corollary}\label{corBlowup}
If $u\in L^\infty(0,T;H^{1/2})\cap L^2(0,T;H^{3/2})$ is a local strong solution of \re{eqWeakBurgers} such that $T\in(0,\infty)$ is the maximum existence time (i.e.\ no strong solution exists on $[0,T+\varepsilon]$ for any $\varepsilon>0$), then $\mathrm{ess\ sup}_{(0,T)}\|u(t)\|_{H^1}=\infty$.
\end{corollary}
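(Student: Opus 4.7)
The plan is to argue by contraposition: suppose that $M \coloneqq \mathrm{ess\ sup}_{t\in(0,T)}\|u(t)\|_{H^1}$ is finite, and produce a strong solution on $[0, T+\delta]$ for some $\delta > 0$, contradicting the maximality of $T$.

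First, invoke the observation made just before the corollary, namely that any strong solution in the class $L^\infty(0,T;H^{1/2}) \cap L^2(0,T;H^{3/2})$ automatically lies in $C^0([\varepsilon,T];H^1)$ for every $\varepsilon \in (0,T)$. Combined with the assumed finite essential supremum, this upgrades to the pointwise bound $\|u(t)\|_{H^1} \leq M$ for every $t \in [\varepsilon, T]$.

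Next, fix the uniform existence time supplied by Lemma \ref{lemLocalH1Solns}. Given data $w_0 \in H^1$, the existence time $T^\ast$ in \re{eqTast} depends only on $\|w_0\|_1$ and $\|w_0\|_{L^1}$, and on $\TT^3$ we have $\|w_0\|_{L^1} \leq (2\pi)^{3/2}\|w_0\|_{L^2} \leq (2\pi)^{3/2}\|w_0\|_{H^1}$. Hence for any data whose $H^1$ norm is at most $M$, Lemma \ref{lemLocalH1Solns} guarantees a strong solution existing for a time $T_0 = T_0(M) > 0$ depending only on $M$.

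Now pick $t_0 \in (0,T)$ with $T - t_0 < T_0/2$. Apply Lemma \ref{lemLocalH1Solns} with initial data $u(t_0) \in H^1$: there exists a unique strong solution $\tilde u \in C^0([t_0, t_0 + T_0]; H^1) \cap L^2(t_0, t_0 + T_0; H^2)$ with $\tilde u(t_0) = u(t_0)$. On the overlap interval $[t_0, T)$, both $u$ and $\tilde u$ are strong solutions of \re{eqWeakBurgers} with the same initial data $u(t_0) \in H^{1/2}$, so by the uniqueness of strong solutions (established in Section \ref{secH1/2} for data merely in $H^{1/2}$, which applies \emph{a fortiori} here) they coincide. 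Defining $u^\ast$ to equal $u$ on $[0, t_0]$ and $\tilde u$ on $[t_0, t_0 + T_0]$ yields a strong solution in $C^0([0, t_0+T_0]; H^{1/2}) \cap L^2(0, t_0+T_0; H^{3/2})$ extending $u$ beyond time $T$, since $t_0 + T_0 > T + T_0/2$. This contradicts the maximality of $T$, and the main obstacle, appealing to a uniqueness result not yet proved, is finessed by citing the forward reference to Section \ref{secH1/2}.
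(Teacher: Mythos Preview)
Your proof is correct and follows exactly the approach the paper indicates: the paper merely states that the corollary ``is an easy consequence of \re{eqTast}'' after observing that a strong solution in $L^\infty(0,T;H^{1/2})\cap L^2(0,T;H^{3/2})$ belongs to $C^0([\varepsilon,T];H^1)$, and you have carried out precisely that contrapositive argument---using the uniform lower bound on the local existence time from \re{eqTast} to restart from some $t_0$ near $T$ and extend beyond $T$, invoking the uniqueness proved in Section~\ref{secH1/2} to glue. There is nothing to add; this is just the details the paper omits.
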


In order to prove Theorem \ref{thmBurgersExistUniqH1}, it remains to show that $u$ is, in fact, a global classical solution after the initial time. We will use a bootstrapping argument to obtain local classical solutions, followed by the maximum principle that will allow us to apply Corollary \ref{corBlowup} to show that the solution can be extended for an arbitrary length of time. 

The bootstrapping is carried out with the following lemma which is actually stronger than we will need. We will omit the proof as it is essentially the same as standard results about strong solutions of the Navier--Stokes equations that can be found in \cite{ConstFoias} and \cite{JCR_2006}, for example.

\begin{lemma}\label{lemBootstrap}
If the Galerkin approximations $u_n$ are uniformly bounded in $L^2(\varepsilon,T;H^{s+1})\cap L^\infty(\varepsilon,T;H^s)$ for $s>1/2$ and some $\varepsilon\geq 0$, then they are also bounded uniformly in $L^2(\varepsilon',T;H^{s+2})\cap L^\infty(\varepsilon',T;H^{s+1})$ for any $\varepsilon'\in(\varepsilon,T)$.
\end{lemma}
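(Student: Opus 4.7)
The plan is a standard parabolic bootstrap via an energy estimate at one higher level of regularity, combined with a Chebyshev-type choice of starting time to promote the hypothesised $L^2_t$ control to $L^\infty_t$ control at the cost of shedding a small initial interval.

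First I would take the $L^2$ inner product of \re{eqGalerkinODEs} with $\Lambda^{2(s+1)}u_n$. Since $u_n=P_nu_n$ and $P_n$ commutes with each $\Lambda^r$ and is self-adjoint, the projection disappears from the nonlinear term and one obtains
\[
\frac{1}{2}\frac{\d}{\d t}\|u_n\|_{s+1}^2+\|u_n\|_{s+2}^2 = -\bigl((u_n\cdot\nabla)u_n,\Lambda^{2(s+1)}u_n\bigr)_{L^2}.
\]
The standard three-dimensional Sobolev product estimate $\|fg\|_{H^s}\le C\|f\|_{H^{s_1}}\|g\|_{H^{s_2}}$ (valid whenever $s_1+s_2\ge s+3/2$ and $s_1,s_2\ge s$), applied with $s_1=s+1$ and $s_2=s$---which requires exactly $s>1/2$---gives $\|(u_n\cdot\nabla)u_n\|_{H^s}\le C\|u_n\|_{H^{s+1}}^2$. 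Rewriting the nonlinear term as $(\Lambda^s[(u_n\cdot\nabla)u_n],\Lambda^{s+2}u_n)_{L^2}$, applying Cauchy--Schwarz, and using Young's inequality to absorb half of $\|u_n\|_{s+2}^2$ on the left then produces
\[
\frac{\d}{\d t}\|u_n\|_{s+1}^2+\|u_n\|_{s+2}^2\le C\|u_n\|_{H^{s+1}}^4.
\]

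The hypothesis $u_n\in L^\infty(\varepsilon,T;H^s)$ provides $\|u_n(t)\|_{L^2}\le M$ uniformly, so $\|u_n\|_{H^{s+1}}^2\le 2\|u_n\|_{s+1}^2+2M^2$. Writing $g(t)=\|u_n(t)\|_{s+1}^2$ and $\varphi(t)=\|u_n(t)\|_{H^{s+1}}^2$, I would recast the previous inequality as
\[
g'(t)\le 2C\varphi(t)g(t)+2CM^2\varphi(t),
\]
noting that $\|\varphi\|_{L^1(\varepsilon,T)}$ is uniformly bounded by assumption. Chebyshev's inequality, applied to the same $L^2(\varepsilon,T;H^{s+1})$ bound, yields a time $\tau_n\in(\varepsilon,\varepsilon')$ with $g(\tau_n)\le\|\varphi\|_{L^1(\varepsilon,T)}/(\varepsilon'-\varepsilon)$, uniformly in $n$. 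Gronwall's inequality on $[\tau_n,T]$ then bounds $g$ uniformly in $n$ on $[\varepsilon',T]$, giving the required $L^\infty(\varepsilon',T;H^{s+1})$ estimate. Integrating the differential inequality from $\tau_n$ to $T$ and invoking this bound together with the $L^1$ bound on $\varphi$ supplies the matching uniform $L^2(\varepsilon',T;H^{s+2})$ estimate.

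The only genuine work is the nonlinear product estimate, which is ultimately a consequence of $H^{s+1}\hookrightarrow L^\infty$ in 3D (valid for $s>1/2$); its constant deteriorates as $s\searrow 1/2$ but remains finite throughout the admissible range. Everything else is routine Chebyshev-plus-Gronwall, and the lack of incompressibility presents no difficulty here because, for $s>1/2$, both factors in $(u_n\cdot\nabla)u_n$ sit comfortably in spaces controlled by $\|u_n\|_{H^{s+1}}$.
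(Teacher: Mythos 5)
Your proof is correct, and it is essentially the argument the paper has in mind: the paper omits the proof of this lemma entirely, stating that it is ``essentially the same as standard results about strong solutions of the Navier--Stokes equations'' in Constantin--Foias and Robinson, and your energy estimate at the $H^{s+1}$ level combined with a Chebyshev choice of starting time $\tau_n\in(\varepsilon,\varepsilon')$ and Gronwall is exactly that standard bootstrap. You also correctly identify the one point where the Burgers setting could have caused trouble: the full $H^s$ norm in the hypothesis supplies the uniform $L^2$ bound $M$, so neither incompressibility nor the paper's momentum lemma (Lemma \ref{lemMomentum}) is needed to pass between the seminorm $\|\cdot\|_{s+1}$ and the norm $\|\cdot\|_{H^{s+1}}$. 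One small caution: the product estimate as you state it, with the non-strict inequality $s_1+s_2\ge s+3/2$, fails in borderline cases (e.g.\ $s_1=3/2$, $s_2=s=0$ would require $H^{3/2}\hookrightarrow L^\infty$); this does not affect your proof, since your application has $s_1+s_2-s=s+1>3/2$ strictly for $s>1/2$, which is the standard fact that $H^{s+1}$ multiplies $H^s$ into $H^s$ when $s+1>3/2$.
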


The uniform bounds on $u_n$ we have proved are sufficient to apply this lemma. In particular by applying it five times we have that for any $\varepsilon\in(0,T)$,  $(u_n)_{n=1}^\infty$ is a uniformly bounded sequence in $L^\infty(\varepsilon,T;H^6)$. In this case we have the following estimates on the time derivatives of $u_n$:
\[
\sup_{t\in(\varepsilon,T)}\left\|\frac{\p u_n}{\p t}\right\|_{H^4} \leq \sup_{t\in(\varepsilon,T)}\left(\|u_n(t)\|_{H^4}\|u_n(t)\|_{H^5}+ \|u_n(t)\|_{H^6}\right),
\]
and
\eqnbsa
\,&\sup_{t\in(\varepsilon,T)}\left\|\frac{\p^2u_n}{\p t^2}(t)\right\|_{H^2}\leq\sup_{t\in(\varepsilon,T)}\left(\left\|\frac{\p u_n}{\p t}(t)\right\|_{H^2}\|u_n(t)\|_{H^3}\right)\\ 
&\oneChar+\sup_{t\in(\varepsilon,T)}\left(\|u_n(t)\|_{H^2}\left\|\frac{\p u_n}{\p t}(t)\right\|_{H^3}+\left\|\frac{\p u_n}{\p t}(t)\right\|_{H^4}\right)
\eqnesa
It follows that $(u_n)_{n=1}^\infty$ is a bounded sequence in $H^1(\varepsilon,T; H^4)\cap H^2(\varepsilon,T;H^2)$. This regularity passes to the limit i.e.\ $u\in H^1(\varepsilon,T; H^4)\cap H^2(\varepsilon,T;H^2)$ and hence $u\in C^0([\varepsilon,T];C^2)\cap C^1([\varepsilon,T];C^0)$. This is enough regularity to conclude that $u$ is a local classical solution of the Burgers equations. Note that time regularity on these closed intervals follows by considering larger open intervals. 

To show that $u$ can be extended to a global solution we now use the maximum principle from Lemma \ref{lemMP}. Taking $\varepsilon>0$ as the initial time of the classical solution, as above, we have the following estimate for $t\in[\varepsilon,T]$:
\[
\frac{\d}{\d t}\|u\|_{1}^2 \leq 2|((u\cdot\nabla)u,\Lambda^2 u)_{L^2}| - 2\|u\|_{2}^2 \leq \|u\|_{L^\infty}^2\|u\|_{1}^2. 
\]
Therefore 
\[
\sup_{t\in[\varepsilon,T]}\|u(t)\|_{1} \leq \|u(\varepsilon)\|_{1}\e^{t\|u(\varepsilon)\|_{L^\infty}^2/2}.
\]
This rules out the blowup of $u$ in the $H^1$ norm as $t\to T$, hence by Corollary \ref{corBlowup} the solution can be extended over $[\varepsilon,\infty)$, as required.

This completes the proof of Theorem \ref{thmBurgersExistUniqH1}, subject to a proof of uniqueness which can be found in the next section. 

\section{Proof of Theorem \ref{thmBurgersExistUniq}}\label{secH1/2}

We now set about proving Theorem \ref{thmBurgersExistUniq}. The argument will follow the same pattern as the previous section. That is, we will prove that for initial data in $H^{1/2}$ there exists $T$, independent of $n$, such that the Galerkin systems have solutions on an interval $[0,T]$. We then deduce the existence and uniqueness of a local strong solution $u\in  L^2(0,T;H^{3/2})\cap C^0([0,T];H^{1/2})$ of the Burgers equations. This is regular enough that global solutions can be obtained by appealing to the case of $H^1$ data. 

As in the previous section we denote by $u_n\in C^\infty([0,T_n)\x \TT^3)$ the unique solution to the Galerkin system \re{eqGalerkinODEs} with maximal existence time $T_n$. We allow the case $T_n=\infty$ but note that if $T_n<\infty$ then we necessarily have $\|u_n(t)\|_{L^2}\to\infty$ as $t\nearrow T_n$. 

Following \cite{Marin-Rubio_JCR_Sad_2013} (see also \cite{Chemin_book_2006}, \cite{Calderon_1990} and \cite{Fabes_etal_1972}) we split \re{eqGalerkinODEs} into a heat part, and a nonlinear part with zero initial data. Let $v$ be the periodic solution of the heat equation with initial data $u_0$, then $v_n\coloneqq P_nv$ satisfies
\eqnbs
\frac{\p}{\p t} v_n + \Lap v_n =0,\; v_n(0)=P_nu_0.
\eqnes 
Let $w_n\coloneqq u_n-v_n$, then $w_n$ satisfies 
\eqnbl{eqW}
\frac{\p}{\p t}w_n+P_n[(u_n\cdot\nabla)u_n] - \Lap w_n =0,\;w_n(0)=0.
\eqne
For $v_n$ and $t\in[0,T_n)$ we have the estimate
\eqnbal{eqVBound}
\sup_{s\in[0,t]}\|v_n(s)\|_{H^{1/2}}^2+2\int_0^t\|v_n(s)\|_{H^{3/2}}^2\,\d s\leq \|P_nu_0\|_{H^{1/2}}^2.
\eqnea
Integrating \re{eqW} against $\Lambda^{1} w_n$, gives
\eqnbal{eqWCalc1}
\,&\|w_n(t)\|_{1/2}^2 + 2\int_0^t\|w_n(s)\|_{3/2}^2\,\d s\\
&\leq \int_0^t \|u_n(s)\|_{L^6}\|\nabla u_n(s)\|_{L^2}\|\Lambda^{1}w_n(s)\|_{L^3}\,\d s\\
&\leq c_1\int_0^t\|u_n(s)\|_{H^1}\|u_n(s)\|_{1}\|w_n(s)\|_{3/2}\,\d s\eqqcolon I_0.
\eqnea
For some $c_1>0$. Now by Lemma \ref{lemMomentum} and the definition of $w_n$, 
\eqnbsa
\ &\|u_n(t)\|_{H^1}\|u_n(t)\|_{1}\leq c_2\left(\|u_n(t)\|_{1}+\int_0^t\|u_n(s)\|_{1/2}^2\,\d s+\|u_0\|_{L^1}\right)\|u_n(t)\|_{1}\\
&\oneChar\leq 2c_2(\|v_n(t)\|_{1}^2+\|w_n(t)\|^2_{1}) \\
&\oneChar\oneChar+ c_2(\|v_n(t)\|_{1}+\|w_n(t)\|_{1})\left(\int_0^t\|u_n(s)\|_{1/2}^2\,\d s+\|u_0\|_{L^1}\right)\eqqcolon I_1 + I_2
\eqnesa
for some $c_2>0$.
To estimate $I_1\x\|w_n\|_{3/2}$ we apply Young's inequality,
\eqnbsa
\,&\|w_n(t)\|_{3/2}(\|v_n(t)\|_{1}^2+\|w_n(t)\|^2_{1})\\
&\oneChar\leq\frac{1}{4c_1c_2}\|w_n(t)\|_{3/2}^2+ c \|v_n(t)\|_{1}^4 +  \|w_n(t)\|_{3/2}\|w_n(t)\|^2_{1},
\eqnesa
for some $c>0$. Also by several applications of Young's inequality, we estimate $I_2\x\|w_n\|_{3/2}$ as follows:
\eqnbsa
\,& \|w_n(t)\|_{3/2}(\|v_n(t)\|_{1}+\|w_n(t)\|_{1})\left(\int_0^t\|u_n(s)\|_{1/2}^2\,\d s+\|u_0\|_{L^1}\right)\\
&\leq\frac{1}{2}\|w_n(t)\|_{3/2}\left(\|v_n(t)\|_{1}^2 +\|w_n(t)\|_{1}^2\right)\\
&\oneChar+ \|w_n(t)\|_{3/2}\left(\int_0^t\|u_n(s)\|_{1/2}^2\,\d s+\|u_0\|_{L^1}\right)^2\\
&\leq\frac{1}{2c_1c_2}\|w_n(t)\|_{3/2}^2 + c\|v_n(t)\|^4_{1} +\frac{1}{2}\|w_n(t)\|_{3/2}\|w_n(t)\|_{1}^2\\
&\oneChar+c\left(\int_0^t\|u_n(s)\|_{1/2}^2\,\d s+\|u_0\|_{L^1}\right)^4
\eqnesa 
for some $c>0$.  To control the $\|w_n\|_{3/2}\|w_n\|^2_{1}$ terms in the last two estimates we use the interpolation 
\eqnbsa
 \,&\int_0^t\|w_n(s)\|_{3/2}\|w_n(s)\|^2_{1}\,\d s\leq \int_0^t\|w_n(s)\|_{3/2}^2\|w_n(s)\|_{1/2}\,\d s\\
&\oneChar\leq \frac{1}{5c_1c_2}\sup_{s\in[0,t]}\|w_n(s)\|_{1/2}^2 + c\left(\int_0^t\|w_n(s)\|^2_{3/2}\,\d s\right)^2
\eqnesa
for some $c>0$. Recombining these estimates of  $I_0$ and multiplying by $2$, \re{eqWCalc1} becomes
\eqnbal{eqWBound}
\,&\sup_{s\in[0,t]}\|w_n(s)\|_{1/2}^2+2\int_0^t\|w_n(s)\|^2_{3/2}\,\d s\\
&\leq a_1\int_0^t\|v_n(s)\|_{1}^4\,\d s + a_2\left(\int_0^t\|w_n(s)\|_{3/2}^2\,\d s\right)^2 \\
&\oneChar+ a_3\int_0^t\left(\int_0^s\|u_n(r)\|^2_{1/2}\,\d r+\|u_0\|_{L^1}\right)^4\,\d s ,
\eqnea
where $a_1,a_2,a_3>0$ are independent of $n$ and $t$. 
To simplify the last term we fix $c'>0$ such that
\eqnbsa
\int_0^t\left(\int_0^s\|u_n(r)\|^2_{1/2}\,\d r\right)^4\,\d s \leq c' t\left(\int_0^t\|v_n(s)\|^2_{1/2}\,\d s\right)^4 + c' t^5\sup_{s\in[0,t]}\|w_n\|^8_{1/2}.
\eqnesa
Thus \re{eqWBound} becomes
\eqnbal{eqWBound2}
\,&\sup_{s\in[0,t]}\|w_n(s)\|_{1/2}^2+2\int_0^t\|w_n(s)\|^2_{3/2}\,\d s\\
&\leq a_1\int_0^t\|v(s)\|_{1}^4\,\d s + a_2\left(\int_0^t\|w_n(s)\|_{3/2}^2\,\d s\right)^2 + a_3c' t\|u_0\|_{L^1}^4 \\
&\oneChar+ a_3c' t\left(\int_0^t\|v(s)\|^2_{1/2}\,\d s\right)^4 + a_3c' t^5\sup_{s\in[0,t]}\|w_n(s)\|^8_{1/2}.
\eqnea
This used the fact that $\|v_n(t)\|_{\sigma}$ is an increasing function of $n$ for all $\sigma\geq 0$ and $t\in[0,T^n]$.

We next use \re{eqWBound2} to find a uniform lower bound on the maximal existence time $T_n$, of $u_n$. It suffices to consider the case $T_n<\infty$. Comparing the $w_n$ terms on the left-hand and right-hand sides of \re{eqWBound2},  we define
\[
E(t)\coloneqq{a_2}\left(\int_0^t\|w_n(s)\|_{3/2}^2\,\d s\right) +a_3c' t^5\sup_{s\in[0,t]}\|w_n(s)\|^6_{1/2}
\]
and set
\[
\tau_n\coloneqq\sup\left\{t\in[0,T_n): E(t) \leq 1\right\}.
\]
Observe that $\tau_n<T_n$ since $E$ is continuous and $E(t)\to\infty$ as $t\to T_n$, because $\|w_n(t)\|_{L^2}$ must blow up as $t\to T_n$. This also means that $E(\tau_n)=1$.

As notation for the terms in the right-hand side of \re{eqWBound2} that do not depend on $w_n$, we define
\[
F(t)\coloneqq a_1\int_0^t\|v(s)\|_{1}^4\,\d s + a_3c' t\left(\int_0^t\|v(s)\|^2_{1/2}\,\d s\right)^4 + a_3c' t\|u_0\|_{L^1}^4.
\]
Note that $F(t)$ is a continuous increasing function that is positive except at $t=0$ (assuming that $u_0$ is non-zero). We now define  
\[
T \coloneqq \sup \left \{t\in[0,\infty):F(t)<\min\left(\frac{1}{(16a_3c't^5)^{1/3}},\frac{1}{2a_2}\right) \right\}.
\]
It is easy to see that $T>0$ and is independent of $n$. We will show that $T_n\geq T$ for all $n$.
Suppose, for contradiction, that $\tau_n<T$, then by \re{eqWBound2},
\[
\frac{1}{2}\sup_{s\in[0,\tau_n]}\|w_n(s)\|_{1/2}^2+\int_0^{\tau_n}\|w_n(s)\|^2_{3/2}\,\d s\leq F(\tau_n).
\]
Hence
\[
E(\tau_n)=a_2\left(\int_0^{\tau_n}\|w_n(s)\|_{3/2}^2\,\d s\right) +a_3c' {\tau_n}^5\sup_{s\in[0,\tau_n]}\|w_n(s)\|^6_{1/2}<1.
\]
This is a contradiction since we showed that $E(\tau_n)=1$.

We have shown that $T_n\geq T$ for all $n$. Furthermore, arguing as above we have
\[
\frac{1}{2}\sup_{s\in[0,T]}\|w_n(s)\|_{1/2}^2+\int_0^{T}\|w_n(s)\|^2_{3/2}\,\d s\leq F(T).
\]
Thus $(u_n)_{n=1}^\infty$ is uniformly bounded in $L^2(0,T;H^{3/2})$ and $L^\infty(0,T;H^{1/2})$; moreover this regularity implies that $\p_t u_n\in L^2(0,T;H^{-1/2})$, by a routine calculation.
Proceeding as before with a standard compactness argument one can show that $u$ is a local strong solution in the sense of \re{eqWeakBurgers}.

Next we prove that this local solution is unique (this argument also applies to give the uniqueness we claimed in Section \ref{secH1}). Suppose that $u$ and $v$ are strong solutions to \re{eqWeakBurgers} with the same initial data. Set $w=u-v$ then taking the product of the equation satisfied by $w$ with $2\Lambda^{1}w$ yields the estimate
\eqnbal{eqnUniqEst1}
\|w(t)\|_{1/2}^2 +2\int_0^t\|w(s)\|_{1/2}^2\,\d s &\leq c\int_0^t\|u(s)\|_{L^6}\|w(s)\|_{1}\|w(s)\|_{3/2}\,\d s\\
&\oneChar+ c\int_0^t \|w(s)\|_{H^{1/2}}\|v(s)\|_{3/2}\|w(s)\|_{3/2}\,\d s . 
\eqnea
For the first term we use interpolate $\|w\|_{1}^2\leq\|w\|_{1/2}\|w\|_{3/2}$ and Young's inequality to obtain:
\eqnbal{eqnUniqEst2}
c\|u(s)\|_{L^6}\|w(s)\|_{1}\|w(s)\|_{3/2}\leq c\|u(s)\|_{H^1}^4\|w(s)\|_{1/2}^2 + \|w(s)\|_{3/2}^2.
\eqnea
For the second we make use of Lemma \ref{lemMomentum} and the fact that $w(0)=0$:
\eqnbal{eqnUniqEst3}
\,&c\|w(s)\|_{H^{1/2}}\|v(s)\|_{3/2}\|w(s)\|_{3/2}\leq c\|v(s)\|_{3/2}^2\|w(s)\|_{1/2}^2 + \|w(s)\|_{3/2}^2\\
&\oneChar +c\|v(s)\|_{3/2}^2\left(\int_0^s\|w(r)\|_{1/2}\left(\|u(r)\|_{1/2}+\|v(r)\|_{1/2}\right)\,\d r\right)^2.
\eqnea
The integral over $[0,t]$ of the last term in \re{eqnUniqEst3} is at most
\[
c\left( \int_0^t\|v(s)\|_{3/2}^2\d s\right)\left(\int_0^t \|w(s)\|_{1/2}^2\d s\right)\left(2\int_0^t\|u(s)\|_{1/2}^2+\|v(s)\|_{1/2}^2\d s\right).
\] 
As $u\in L^4(0,T;H^{1/2})$ and $v\in L^2(0,T;H^{1/2})\cap L^2(0,T;H^{3/2})$, this together with \re{eqnUniqEst1}, \re{eqnUniqEst2} and  \re{eqnUniqEst3} imply that
\[
\|w(t)\|_{1/2}^2 \leq \int_0^t G(s)\|w(s)\|_{1/2}^2\,\d s
\]
for some $G\in L^1(0,T)$. A Gronwall inequality now implies that, since $\|w(0)\|_{1/2}=0$, $\|w(t)\|_{1/2}=0$ for all $t\in[0,T]$. Uniqueness now follows using Lemma \ref{lemMomentum}. 

We have proved the following.
\begin{lemma}\label{lemExistUniq}
For $u_0\in H^{1/2}$ there exists $T>0$ and a unique strong solution $u\in L^{2}(0,T;H^{3/2}) \cap C^0([0,T];H^{1/2})$ to the Burgers equations, in the sense of \re{eqWeakBurgers}. 
\end{lemma}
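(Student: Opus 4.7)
The plan is to construct the solution via Galerkin approximations, paralleling the argument of Section \ref{secH1} but starting one derivative lower. For each $n$, the ODE system \re{eqGalerkinODEs} with data $P_n u_0$ has a unique smooth solution $u_n$ on a maximal interval $[0,T_n)$; following the Calder\'on-type splitting used by \cite{Marin-Rubio_JCR_Sad_2013}, I would write $u_n = v_n + w_n$ with $v_n \coloneqq P_n \e^{t\Lap}u_0$ solving the linear heat equation (so that \re{eqVBound} is immediate) and $w_n \coloneqq u_n - v_n$ carrying the entire nonlinearity but vanishing at $t=0$. Because the mean of $u_n$ is not preserved, a recurring ingredient is Lemma \ref{lemMomentum}, which converts seminorm control into genuine Sobolev-norm control up to a momentum defect of size $\int_0^t\|u_n\|_{1/2}^2\,\d s + \|u_0\|_{L^1}$.

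The heart of the argument is an a priori estimate for $w_n$ obtained by testing \re{eqW} against $\Lambda^1 w_n$. Using H\"older, the embeddings $H^1 \hookrightarrow L^6$ and $H^{1/2} \hookrightarrow L^3$, interpolation $\|w_n\|_1^2 \leq \|w_n\|_{1/2}\|w_n\|_{3/2}$, and Young's inequality (to absorb an $\|w_n\|_{3/2}^2$ term into the left-hand side), I expect an inequality of the schematic form
\eqnbs
\sup_{[0,t]}\|w_n\|_{1/2}^2 + \int_0^t \|w_n\|_{3/2}^2\,\d s \leq F(t) + a_2\Bigl(\int_0^t \|w_n\|_{3/2}^2\,\d s\Bigr)^2 + a_3 c' t^5 \sup_{[0,t]}\|w_n\|_{1/2}^8,
\eqnes
where $F(t)$ collects only contributions depending on the heat flow $v_n$ and on $\|u_0\|_{L^1}$. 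The high-power $t^5\sup\|w_n\|^8$ term is the cost of using Lemma \ref{lemMomentum} to replace the seminorm by the full $H^1$ norm inside the nonlinear estimate.

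With such an inequality in hand, a continuity/barrier argument yields a uniform existence time: let $E(t)$ denote the $w_n$-dependent right-hand side terms and let $T$ be the largest time for which $F(T)$ lies below an explicit threshold depending only on $u_0$ and the structural constants. If $\tau_n \coloneqq \inf\{t \leq T_n: E(t) = 1\}$ were less than $T$, the estimate would force $E(\tau_n) < 1$, a contradiction; hence $T_n \geq T$ uniformly in $n$, and $w_n$ is bounded in $L^\infty(0,T;H^{1/2})\cap L^2(0,T;H^{3/2})$. A bound on $\p_t u_n$ in $L^2(0,T;H^{-1/2})$ then follows from \re{eqGalerkinODEs}, and an Aubin-Lions compactness argument produces a limit $u \in C^0([0,T];H^{1/2})\cap L^2(0,T;H^{3/2})$ satisfying \re{eqWeakBurgers}.

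For uniqueness, I would take two strong solutions $u,v$ with identical data, set $w=u-v$, and test the difference equation against $\Lambda^1 w$. Splitting the nonlinearity as $(u\cdot\nabla)w + (w\cdot\nabla)v$, applying interpolation and Young, and exploiting $u\in L^4(0,T;H^{1/2})$ (by interpolating between $L^\infty(0,T;H^{1/2})$ and $L^2(0,T;H^{3/2})$), together with Lemma \ref{lemMomentum} applied to $w$ (where $w(0)=0$ kills the $L^1$-data contribution), I expect to reach an inequality $\|w(t)\|_{1/2}^2 \leq \int_0^t G(s)\|w(s)\|_{1/2}^2\,\d s$ with $G \in L^1(0,T)$, from which Gronwall gives $w \equiv 0$. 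The main obstacle throughout is the momentum bookkeeping: each time genuine Sobolev control is needed rather than the seminorm, Lemma \ref{lemMomentum} injects a time integral whose reabsorption forces the delicate high-power terms and compels a barrier argument in place of a naive Gronwall closure.
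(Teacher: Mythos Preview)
Your proposal is correct and follows the paper's argument essentially step for step: the Calder\'on-type splitting $u_n=v_n+w_n$, the $\Lambda^1 w_n$ energy estimate leading to an inequality of exactly the schematic form you describe (this is \re{eqWBound2}), the continuity/barrier argument with $E(t)$ and $F(t)$ to obtain a uniform time $T$, Aubin--Lions compactness, and the uniqueness proof via Gronwall with Lemma~\ref{lemMomentum} handling the momentum defect of $w$. One small slip: in the uniqueness step what is actually needed (and what the interpolation you mention yields) is $u\in L^4(0,T;H^1)$, not merely $L^4(0,T;H^{1/2})$, since the estimate \re{eqnUniqEst2} produces a factor $\|u\|_{H^1}^4$.
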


Fix a representative of $u$ that is continuous with respect to time into $H^{1/2}$. For almost every $t\in[0,T]$, we certainly have $u(t)\in H^1$, in which case  we can apply Theorem \ref{thmBurgersExistUniqH1} to obtain global classical solutions (on $(t,\infty)$) with initial data $u(t)$. By continuity of $u$ and uniqueness of local strong solutions these classical solutions agree with $u$ on their common domain. This completes the proof of Theorem \ref{thmBurgersExistUniq}.

\section{Conclusions}
We have shown that in the case of periodic boundary conditions the vector-valued diffusive Burgers equations have a unique solution given initial data in $H^{1/2}$. These solutions become classical immediately after the initial time and can be extended globally. 

The results here contrast with classical results about the Navier--Stokes equations, which have thus far only been shown to have \textit{local} well-posedness in $\Hd^{1/2}$ (see \cite{Marin-Rubio_JCR_Sad_2013} or \cite{Chemin_book_2006}). The main difference between these two systems seems to be the maximum principle for the Burgers equations. In other respects the analysis is slightly more straightforward in the case of Navier--Stokes, since we can make use of incompressibility. 

In several places we appealed to the analysis of Fourier series but otherwise we have not used the periodicity of the solution in an 
essential way. Therefore we might expect similar results to hold on $\RR^3$ or on other domains. 

As discussed in the introduction we have not been able to find weak solutions for less regular data $u_0\in L^2$ and it would be interesting to seek well-posedness of the Burgers equations in the various critical spaces that are often used to find local well-posedness results for the Navier--Stokes equations. Some examples of such spaces are: $L^3$ (\cite{KatoLp_1984}), certain Besov spaces (\cite{CannoneMeyerPlanchon_1994})  and $\mathrm{BMO}^{-1}$ (\cite{Koch_Tataru_2001}). 

 Irrespective of any approach in $L^3$ and the other aforementioned spaces, the existence of a maximum principle leads us to ask whether initial data $u_0\in L^\infty\cap L^2$ is enough to deduce local or global well-posedness. The main obstacle to doing this seems to be that we must find classical solutions before applying the maximum principle, since the maximum principle does not seem to pass to the Galerkin approximations. Using another system of approximations might avoid this difficulty, for example, a variation on the time-discretisation approach of \cite{Kiselev_Ladyzhenskaya}.

\bibliographystyle{authordate1}

\end{document}